\definecolor{darkblue}{HTML}{004B83}
\definecolor{darkgreen}{HTML}{0C8900}
\begin{document}
  \title{Uniqueness of Optimal Point Sets Determining Two Distinct Triangles}
  \author[H. N. Brenner] {Hazel N. Brenner}
  \email{\textcolor{blue}{\href{mailto:hazelbrenner@vt.edu}{hazelbrenner@vt.edu}}}
  \address{Department of Mathematics, Virginia Tech, Blacksburg, VA 24061}
  
	\author[J. S. Depret-Guillaume]{James S. Depret-Guillaume}
	\email{\textcolor{blue}{\href{mailto:jdg@vt.edu}{jdg@vt.edu}}}
	\address{Department of Mathematics, Virginia Tech, Blacksburg, VA 24061}  
  
  \author[E. A. Palsson]{Eyvindur A. Palsson}
	\email{\textcolor{blue}{\href{mailto:palsson@vt.edu}{palsson@vt.edu}}}
  \address{Department of Mathematics, Virginia Tech, Blacksburg, VA 24061}
  
  \author[S. Senger]{Steven Senger}
  \email{\textcolor{blue}{\href{mailto:stevensenger@missouristate.edu}{stevensenger@missouristate.edu}}}
  \address{Department of Mathematics, Missouri State University, Springfield, MO, USA.}

  \subjclass[2010]{52C10 (primary), 52C35 (secondary)}
  \keywords{Distinct triangles, Erd\H{o}s problem, Optimal configurations, Finite point configurations}
  \date{\today}
  \thanks{The work of the third listed author was supported in part by Simons Foundation Grant \#360560.}
  
  \begin{abstract}
			In this paper, we show that the maximum number of points in $d\geq3$ dimensions determining exactly 2 distinct triangles is $2d$. We further show that this maximum is uniquely achieved by the vertices of the $d$-orthoplex. We build upon the work of Hirasaka and  Shinohara who determined in \cite{hirasaka_shinohara} that the $d$-orthoplex is such an optimal configuration, but did not prove its uniqueness. Further, we present a more elementary argument for its optimality.
  \end{abstract}


  \maketitle
  
  \tableofcontents
    
  \section{Introduction}
		Paul Erd\H{o}s posed his distinct distance conjecture in 1946, spawning a wide array of similar problems in discrete geometry. He originally conjectured that a set of $n$ points in general position in the plane must determine $\Omega(n/\sqrt{\text{log}(n)})$ distinct distances \cite{erdos}. This conjecture was ultimately proved by Guth and Katz in 2015 \cite{guth_katz}. Instead of fixing the number of points, consider fixing the number of distinct distances $k$. A question that is closely associated to the original Erd\H{o}s distinct distance problem asks: what is the maximal number of points determining $k$ distinct distances? In 1996, Erd\H{o}s and Fishburn found all the optimal point configurations in the plane determining $k\leq 4$ distinct distances and found an optimal point configuration determining $k=5$ distinct distances (see Figure \ref{fig:distances}). 
	\begin{figure}[!h]
		\includegraphics[scale=0.8]{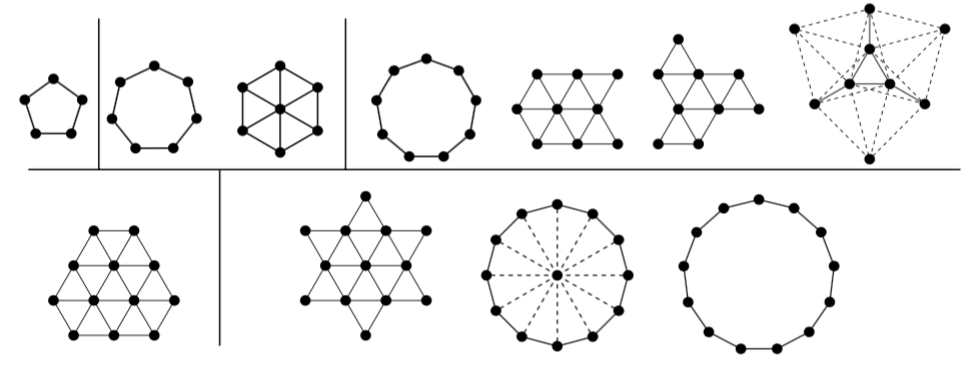}
		\caption{Maximal configurations determining exactly $k$ distances, for $2 \leq k \leq 6$ \cite{brass_et_al}.  Note that for each $k>2$, there is an example from the triangular lattice; it is conjectured that this is always the case for $k$ large enough.}
		\label{fig:distances}
	\end{figure}		
		They also conjectured that for each $k\geq 3$, at least one solution lies on the triangular lattice, and for $k\geq 7$, all solutions lie on the triangular lattice. This conjecture remains open \cite{erdos_fishburn}. 
 		
     A distance can be interpreted as a 1-simplex, so a natural generalization of this problem is to find the greatest number of points determining $k$ higher simplices. In our case, we consider 2-simplices, i.e. triangles. Some work has already been done in this direction. In \cite{epstein_et_al}, Epstein et al. showed that the optimal configuration determining one distinct triangle is the vertices of the square, and the optimal configurations determining two distinct triangles are the square with its center and the vertices of the regular pentagon. In \cite{brenner_et_al}, Brenner et al. determine that the unique optimal configuration determining one distinct triangle is the $d$-simplex in $\R^d$ for $d\geq 3$. Finally, in \cite{hirasaka_shinohara}, Hirasaka and Shinohara determined that an optimal configuration determining two triangles is the $d$-orthoplex in $\R^d$ for $d\geq 3$. As our main result, we provide a more elementary argument that the $d$-orthoplex is an optimal configuration determining two distinct triangles and offer a proof that it is in fact the unique optimal configuration determining two distinct triangles.
  \begin{thm}\label{thm:main_result}
  	The vertices of the $d$-orthoplex are the unique optimal configuration determining two distinct triangles in $\R^d$ for $d\geq 3$.    
  \end{thm}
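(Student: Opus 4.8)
The plan is to first pin down the metric signature of the orthoplex and then show that any competing configuration either has strictly fewer than $2d$ points or is congruent to it up to scaling. Writing the orthoplex vertices as $\pm e_1,\dots,\pm e_d$, a direct computation shows that three vertices containing an antipodal pair $e_i,-e_i$ span the isosceles triangle $(\sqrt2,\sqrt2,2)$, while any three vertices with no antipodal pair span the equilateral triangle $(\sqrt2,\sqrt2,\sqrt2)$; these are the only two triangles, so the orthoplex is admissible. Its crucial feature is that it is a \emph{two-distance set} whose larger distance $b=2$ is realized by a perfect matching of the vertices (the $d$ antipodal pairs), with $b=a\sqrt2$ for the smaller distance $a=\sqrt2$. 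I would aim to show that these three properties — exactly two distances, the far distance forming a matching, and the relation $b=a\sqrt2$ — are all forced at the optimum.

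First I would argue that a configuration determining exactly two triangles and having close to $2d$ points must be a two-distance set: with $s\ge 3$ distinct distances each of the two congruence types is a triple of lengths, so at most six distances can occur, and I expect the scalene/many-distance cases to rigidify the points into low dimension and force far fewer than $2d$ of them. Granting a two-distance set with distances $a<b$, the only possible congruence types are $(a,a,a)$, $(a,a,b)$, $(a,b,b)$, $(b,b,b)$, and I would translate ``exactly two types occur'' into conditions on the graph $G_b$ of pairs at distance $b$. The types $(a,b,b)$ and $(b,b,b)$ are present precisely when some vertex has two $G_b$-neighbors, so excluding both is equivalent to $G_b$ having maximum degree one, i.e. $G_b$ is a matching. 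A short check over the six possible pairs of types then shows that only $\{(a,a,a),(a,a,b)\}$ — which demands that $G_b$ be a matching while its complement contains a triangle — is compatible with $\ge 2d$ points, the other pairs forcing bounded equilateral substructures.

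The core of the argument is the matching case. For two matched pairs $\{u_i,w_i\}$ and $\{u_j,w_j\}$ all four cross distances equal $a$, and combining $|u_i-u_j|^2$, $|u_i-w_j|^2$, $|w_i-u_j|^2$, $|w_i-w_j|^2$ yields $(u_i-w_i)\cdot(u_j-w_j)=0$. Hence the diameter vectors $d_i:=u_i-w_i$ are mutually orthogonal, so there are at most $d$ matched pairs. Moreover every point other than $u_i,w_i$ is equidistant from them and so lies on the perpendicular bisector hyperplane of $d_i$; taking the $\hat d_i$ as part of an orthonormal frame with span $V$, each point's component in $V$ is pinned down, while its component in $V^\perp$ carries the residual geometry. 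I would then extract $|u_i^\perp-u_j^\perp|^2=a^2-b^2/2$ and $|u_i^\perp-w_i^\perp|=0$, so each pair collapses to a single point $y_i\in V^\perp$ with the $y_i$ pairwise at distance $\sqrt{a^2-b^2/2}$, and any unmatched vertex also lives in $V^\perp$.

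To finish I would run the dimension count in $V^\perp$: if $b^2<2a^2$ the $y_i$ are distinct and span a simplex, forcing the number of matched pairs and of unmatched vertices down and hence $n<2d$; the only way to reach $d$ matched pairs is $b=a\sqrt2$, which collapses all $y_i$ to one point, kills $V^\perp$, leaves no room for unmatched vertices, and recovers the orthoplex up to isometry and scaling. The main obstacle I anticipate is making these last dimension counts airtight at the boundary — small $d$, and configurations mixing a few matched pairs with unmatched vertices or a nontrivial sub-configuration in $V^\perp$ — since the naive inequalities are not strict there; handling this cleanly will likely require feeding the global two-triangle constraint back into the $V^\perp$ sub-configuration to exclude extra points, and separately disposing of the $s\ge 3$ distance case, which is where I expect the real work to lie.
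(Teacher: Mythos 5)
Your core argument in the two-distance case is a genuinely different and essentially correct route from the paper's. The paper proceeds by (i) a chain lemma bounding the number of distances by $2t+1$, (ii) two combinatorial covering lemmas in the spirit of Fort--Hedlund to kill $3$, $4$, or $5$ distances, (iii) a synthetic argument pinning the two triangle types to $(d_1,d_1,d_1)$ and $(d_1,d_1,d_2)$, and (iv) an induction on dimension, with the octahedron as an explicitly worked base case, that peels off one antipodal pair at a time. You replace (iv) (and part of (iii)) with a linear-algebraic analysis of the far-distance graph $G_b$: the exclusion of the types $(a,b,b)$ and $(b,b,b)$ is exactly the statement that $G_b$ is a matching, the identity $|u_i-u_j|^2-|u_i-w_j|^2-|w_i-u_j|^2+|w_i-w_j|^2=-2(u_i-w_i)\cdot(u_j-w_j)=0$ forces the diameter vectors to be mutually orthogonal, and the projection onto $V^\perp$ with $|u_i^\perp-u_j^\perp|^2=a^2-b^2/2$ both caps the number of matched pairs at $d$ and derives the ratio $b=a\sqrt2$ rather than verifying it. This buys you a uniform argument with no induction and no separate base case, and it globalizes cleanly (it simultaneously shows no configuration with more than $2d$ points exists, which the paper handles in a separate corollary). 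Your anticipated boundary worry is real but finite: for instance $d=3$ with two matched pairs and two unmatched vertices gives $n=6=2d$ and is only excluded by noting that $V^\perp$ is a line, on which a point equidistant from the two distinct $y_i$ is unique, so the two unmatched projections would coincide while being at distance $a>0$; a handful of such checks closes the count.

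The genuine gap is the reduction to two distances, which you defer with ``I expect the scalene/many-distance cases to rigidify the points into low dimension.'' That is not the mechanism, and low-dimensional rigidity is not what makes these cases fail. The paper's proof shows the obstruction is purely combinatorial and local to a single vertex: with at least six points, some point $\Oc$ sees five other points, so by pigeonhole either many distinct distances or repeated distances emanate from $\Oc$, and counting the triangle types forced among the pairs of edges at $\Oc$ (a covering-by-triples problem; Lemmas \ref{lem:distinct_bound} and \ref{lem:repeat_bound}) already yields at least three distinct triangles whenever three or more distances occur. You will need an argument of this kind --- note that it uses only ``at least six points,'' not ``at least $2d$ points,'' so dimension plays no role there --- before your matching analysis can be invoked. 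Similarly, your ``short check over the six possible pairs of types'' is doable but is where the paper's Propositions \ref{prop:elim_dds} and \ref{prop:tri_geom} spend most of their effort; the pair $\{(a,a,b),(a,b,b)\}$, for example, is eliminated by a pigeonhole argument producing a forbidden equilateral triangle, not by a dimension count.
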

  
  \section{Definitions, Lemmas and Setup}
  First, let us formalize our notion of distinct triangles.
  
  \begin{defi}
	  Given a finite point set $P \subset \R^d$, we say two triples $(a,b,c), (a',b',c') \in P^3$ are equivalent if there is an isometry mapping one to the other, and we denote this as $(a,b,c) \sim (a',b',c')$.
	\end{defi}
	
	\begin{defi}
	  Given a finite point set $P \subset \R^d$, we denote by $P_{nc}^3$ the set of noncollinear triples $(a,b,c) \in P^3$.
	\end{defi}
	
	\begin{defi}
	  Given a finite point set $P \subset \R^d$, we define the set of distinct triangles determined by $P$ as
	  \begin{equation}
	    T(P) := P_{nc}^3 / \sim.
	  \end{equation}
	  \label{def:tris}
	\end{defi}
	
	We will need the following lemmas to proceed with our argument for our main result. Note that given a finite point set $S$ to say that a point $P$ in $S$ determines a distance $d$ means that there is another point $Q$ in $S$ such that $PQ = d$.
	
	\begin{restatable}{lem}{lemmaxdds}
		A finite point set determining $t$ distinct triangles determines at most $2t + 1$ distinct distances.	
		\label{lem:max_dds}
	\end{restatable}
	
	\begin{restatable}{lem}{lemdistinctbound}
		A finite point set containing a point which determines $n$ distinct distances must determine at least $$\ceil*{\frac{n}{2}\cdot\floor*{\frac{n}{3}}}$$
		distinct triangles.
		\label{lem:distinct_bound}
	\end{restatable}
	
	\begin{restatable}{lem}{lemrepeatbound}
		Given a finite point set $S$ and a point $P$ in $S$. Let $n$ be the number of distinct distances determined by $P$. Let $m$ denote the number of these distinct distances which are determined by $P$ and two or more distinct points. Then, $S$ determines at least $$\ceil*{\frac{n(n-1) + 4m}{6}}$$ distinct triangles.
    \label{lem:repeat_bound}
	\end{restatable}
	
	\noindent For an illustration of Lemma \ref{lem:repeat_bound}, see Figure \ref{fig:repeat_bound}.
	\begin{figure}[!h]
		\centering
		\begin{tikzpicture}[scale=.02]
	\tkzDefPoint(0,0){O}
	\tkzDefPoint(-90, 40){A}
	\tkzDefPoint(-55, 80){B} %
	\tkzDefPoint(0,100){C}
	\tkzDefPoint(55, 80){D}
	\tkzDefPoint(90, 40){E}
	
	\tkzDrawPoints(O, A, B, C, D, E)
	\tkzDrawSegment(O, A)	
	\tkzDrawSegment[color=darkblue](O, B)
	\tkzDrawSegment[color=darkgreen](O, C)
	\tkzDrawSegment[color=darkblue](O, D)
	\tkzDrawSegment[color=darkgreen](O, E)
	
	\tkzMarkSegment[size=2pt, mark=|](O, A)
	\tkzMarkSegment[size=2pt, mark=||](O, B)
	\tkzMarkSegment[size=2pt, mark=|||](O, C)
	\tkzMarkSegment[size=2pt, mark=||](O, D)
	\tkzMarkSegment[size=2pt, 	mark=|||](O, E)
	 
 	\tkzLabelPoint(O){$\Oc$}
 	\tkzLabelPoints[left](A, B, C, D)
 	\tkzLabelPoints[above left](E)
	
\end{tikzpicture}	
		\caption{The distinct distances present are $d_1, d_2$ and $d_3$, so $n=3$. Of these, $d_2$ and $d_3$ are repeated, so $m=2$. Thus, this configuration determines at least 3 distinct triangles.}
		\label{fig:repeat_bound}
	\end{figure}
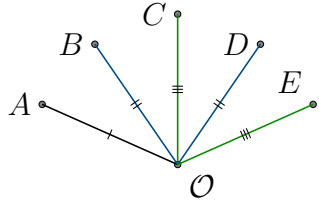	
	 
	 The structure of our main proof will be based primarily on Lemma \ref{lem:max_dds}. By this lemma, configurations determining only two distinct triangles (our primary focus in this paper) can determine at most five distinct distances. In Section \ref{sec:elim_dds} we split into cases and show that any configuration of at least $2d$ points determining five, four or three distinct distances must determine more than two distinct triangles in $d$ dimensions. Trivially, any finite point set determining only one distinct distance can determine at most one distinct triangle. So we can conclude that any configuration of at least $2d$ points determining two distinct triangles must determine exactly two distinct distances. Then, in Section \ref{sec:geom}, we prove that any configuration of at least $2d$ points determining only two distinct triangles must, in fact, determine triangles of specific geometry. That is, one of the triangles must be equilateral and the other must be isosceles with its repeated edge length being that of the equilateral triangle. Then, in Section \ref{sec:main_result}, we show using these restrictions that any set of $2d$ points in $\R^d$ determining two distinct triangles must be the vertices of a $d$-orthoplex. As a corollary, we then show that a configuration of more points necessarily determines more distinct triangles. This proves Theorem \ref{thm:main_result}.
 
 	\section{Eliminating Higher Numbers of Distinct Distances}\label{sec:elim_dds}
 	All of the arguments below will proceed with the following setup. Given a set $S$ of $2d$ points $\R^d$, label one of the points of $S$ as $\Oc$ and the remaining points $P_1$, $P_2$, $\dots$, $P_{2d - 1}$. We will often choose $\Oc$ to be some convenient point in a manner that clearly does not lose generality by relabeling. 
 	\begin{prop}\label{prop:elim_dds}
 		Any configuration of at least $2d$ points in $d\geq 3$ dimensions determining five, four or three distinct distances determines more than two distinct triangles.
 	\end{prop}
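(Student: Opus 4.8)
The plan is to argue by contradiction. Suppose $S$ is a configuration of at least $2d$ points in $\R^d$, with $d\geq 3$, which determines $k$ distinct distances for some $k\in\{3,4,5\}$ but only at most two distinct triangles. The engine of the argument is the observation that the number of distinct triangles having a fixed point $\Oc\in S$ as a vertex is at most the total number of distinct triangles determined by $S$, hence at most two. Consequently Lemmas~\ref{lem:distinct_bound} and~\ref{lem:repeat_bound} may be applied to \emph{every} point of $S$, and each must return a value that is at most $2$. I would use this to pin down the local distance profile at every point. Writing $n(\Oc)$ for the number of distinct distances determined by $\Oc$ and $m(\Oc)$ for the number of those that are determined by two or more points, I note first that each point has $2d-1\geq 5$ spokes, since $d\geq 3$.

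The pointwise restrictions then read as follows. Lemma~\ref{lem:distinct_bound} applied with $n=5$ returns $\lceil\tfrac{5}{2}\lfloor\tfrac{5}{3}\rfloor\rceil=3$, so no point determines five distances. If $n(\Oc)=4$, then distributing the $2d-1\geq 5$ spokes among four distance classes forces a repeat, so $m(\Oc)\geq 1$, and Lemma~\ref{lem:repeat_bound} returns $\lceil(12+4)/6\rceil=3$; hence no point determines four distances. Finally, if $n(\Oc)=3$ then $m(\Oc)\geq 2$ would give $\lceil(6+8)/6\rceil=3$, so $m(\Oc)\leq 1$; since three nonempty classes must absorb $2d-1\geq 5$ spokes with at most one class repeated, the spoke multiset is forced to be $(2d-3,1,1)$, i.e. one common radius realized by $2d-3$ cospherical points together with two further distances each realized exactly once. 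In summary, every point determines at most three distances, and a point determining exactly three has this rigid radial structure.

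For the cases $k=5$ and $k=4$ I would derive a contradiction directly from these restrictions. Choosing a central point $\Oc$ that maximizes $n$, each of the $k-3\geq 1$ distances not seen by $\Oc$ must be realized between two points drawn from the $2d-3$ cospherical points and the two exceptional points around $\Oc$. Tracking the profile of such a point $Q$, which already sees the common radius to $\Oc$ and must additionally see each extra distance incident to it, I expect to overload $Q$ beyond the permitted profile and produce either $n(Q)\geq 4$ or $n(Q)=3$ with $m(Q)\geq 2$, contradicting the restriction. Concretely, analyzing the mutual distances on the sphere $S(\Oc,a)$ together with the triangle types $(a,a,\cdot)$, $(a,b,\cdot)$, $(a,c,\cdot)$, and $(b,c,\cdot)$ that they generate should collapse the global distance count back down to three.

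The case $k=3$ is the main obstacle, because here the pointwise lemmas are tight: a point with $n=3$ and $m=1$ returns exactly $\lceil 10/6\rceil=2$, so no single point yields a contradiction, and in fact the local picture $(2d-3,1,1)$ is entirely consistent with only two triangles. The contradiction must therefore be global and must use the hypothesis of at least $2d$ points. The plan is to impose the rigid radial structure \emph{simultaneously} at every point: the $2d-3$ points on the common sphere about a central point, constrained by the two-triangle restriction so that the triangles $(a,a,\cdot)$ they form fall into the globally permitted types, are forced into a nearly equidistant spherical configuration. Bounding the maximal size of such a configuration in $\R^d$ should then contradict $2d-3$ being that large, and it is precisely at this step that the dimension hypothesis $d\geq 3$ and the lower bound of $2d$ points become essential. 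Coupling the many rigid radial structures into a single size bound is the crux of the argument.
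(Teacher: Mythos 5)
Your pointwise reductions are correct and efficient as far as they go: applying Lemmas~\ref{lem:distinct_bound} and~\ref{lem:repeat_bound} at each point does rule out any point determining five distances, or four distances (since $2d-1\geq 5$ spokes force $m\geq 1$ there), and does force the spoke profile $(2d-3,1,1)$ on any point determining exactly three distances. But the proof is not complete. For $k=4$ and $k=5$ you only gesture at the contradiction (``I expect to overload $Q$,'' ``should collapse the global distance count''); knowing that every point sees at most three of the five global distances is not by itself contradictory, and the tracking argument you describe is precisely the part that needs to be carried out. More seriously, for $k=3$ you explicitly identify the surviving profile $(2d-3,1,1)$ as consistent with all the pointwise bounds and defer the contradiction to a global size bound on ``nearly equidistant spherical configurations.'' That route is unlikely to close: two-distance sets on a sphere in $\R^d$ can have on the order of $d^2$ points, so no contradiction with $2d-3$ points can come from a size bound alone, and you give no mechanism for extracting one from the triangle constraints.

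The idea you are missing is the paper's short geometric argument for exactly this situation. With three global distances the two triangles must be one equilateral and one scalene, so if some distance $d_1$ from a point $X$ is realized by three points $P_1,P_2,P_3$ (which your profile $(2d-3,1,1)$ guarantees, since $2d-3\geq 3$), then each $\triangle XP_iP_j$ is either equilateral or scalene; a parity check on the three triangles forces all of them to be equilateral, hence $P_1P_2P_3$ is equilateral with side $d_1$. Now take a fourth point $P_4$ with $XP_4=d_2$ (such a point exists since not all spokes can be $d_1$): the scalene constraint forces $P_3P_4=d_3$, then $P_2P_4=d_2$, and then $\triangle XP_2P_4$ has sides $(d_1,d_2,d_2)$, an isosceles triangle that is a third congruence class. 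This kills the $(2d-3,1,1)$ profile outright, and the only remaining case, the $d=3$ profile $(2,2,1)$, is exactly the one your Lemma~\ref{lem:repeat_bound} computation already handles. In short, the crux you flag is resolved by a local triangle-chasing argument, not a global extremal one, and an analogous structural step (identifying which triangles may be isosceles or equilateral) is also what the paper uses to finish the four- and five-distance cases that your sketch leaves open.
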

 	\begin{proof}\
 		\begin{description}[align=left]
 			\item[5 Distinct Distances] Given that the two triangles must share an edge, two distinct triangles determining five distinct distances must both be scalene. This means that none of the distances from $\Oc$ to the remaining points can be repeated, otherwise the triangle formed by $\Oc$ and those two points would be isosceles (or equilateral). For $d>3$, $\Oc$ determines more than five distances, so by the pigeonhole principle, at least one of the distances must be repeated, which is a contradiction. In $d=3$ we clearly may assume all of the distances from $\Oc$ are distinct. By Lemma \ref{lem:distinct_bound}, these distances then determine at least $\ceil*{\frac{5}{3}\cdot\floor*{\frac{5}{2}}} = 4$ distinct triangles.
 			\item[4 Distinct Distances] By similar reasoning, a configuration determining an equilateral triangle or two distinct isosceles triangles can determine at most three distinct distances. So, a configuration determining two distinct triangles and four distinct distances may determine no equilateral triangles and at most one distinct isosceles triangle. Note that when a given distance is determined by a single fixed point $P$ and at least two other distinct points $A$, $B$, etc., $\triangle PAB$ must be either equilateral or isosceles. Since it is impossible for one of the two distinct triangles to be equilateral in this case, we can assume that any repeated distances determine isosceles triangles. Since two distinct repeated distances necessarily form two non-congruent isosceles triangles and only one of the distinct triangles may be isosceles, only one distinct distance may be repeated. Finally, consider a configuration with at least two repetitions of a single distinct distance, e.g., $\Oc P_1 =\Oc P_2 = \Oc P_3= d_1$.
 			  \begin{figure}[h!]
 				  \centering
 				  \begin{tikzpicture}[scale=.02]
	\tkzDefPoint(0,0){O}
	\tkzDefPoint(-55, 65){A}
	\tkzDefPoint(0,95){B}
	\tkzDefPoint(55, 65){C}
	
	\tkzDrawPoints(O, A, B, C)	
	\tkzDrawSegment(O, A)	
	\tkzDrawSegment(O, B)
	\tkzDrawSegment(O, C)
	
	\tkzDrawSegment[color=darkblue](A, B)
	\tkzDrawSegment[color=darkblue](B, C)
	\tkzDrawSegment[color=darkblue](A, C)
	
	\tkzMarkSegment[size=2pt, mark=|](O, A)
	\tkzMarkSegment[size=2pt, mark=|](O, B)
	\tkzMarkSegment[size=2pt, mark=|](O, C)
	\tkzMarkSegment[size=2pt, mark=||](A, B)
	\tkzMarkSegment[size=2pt, mark=||](B, C)
	\tkzMarkSegment[size=2pt, mark=||, pos=0.6](A, C)
	 
	\tkzLabelPoint(O){$\Oc$}
	\tkzLabelPoint[left](A){$P_1$}
	\tkzLabelPoint[above](B){$P_2$}
	\tkzLabelPoint[right](C){$P_3$}

\end{tikzpicture}
 				  \caption{}
 				  \label{fig:icecream_cone}
        \end{figure}
        Since this configuration cannot determine an equilateral triangle and determines at most one distinct isosceles triangle, assume that $\triangle \Oc P_1 P_2$, $\triangle \Oc P_2 P_3$ and $\triangle \Oc P_1 P_3$ are all congruent to the same isosceles triangle. Then $\triangle P_1 P_2 P_3$ is equilateral (see Figure \ref{fig:icecream_cone}). So, by contradiction, the repeated distinct distance may be repeated only once. Since only one of the distances from $\Oc$ to the other $2d -1$ points may be repeated at most once, all four distinct distances must appear. So by Lemma \ref{lem:distinct_bound}, these distances then determine at least $\ceil*{\frac{4}{3}\cdot\floor*{\frac{4}{2}}} = 3$ distinct triangles. Notice that this argument depends only on having six or more points, so this holds in all cases.  
 			\item[3 Distinct Distances] In a configuration of points determining three distinct distances and two distinct triangles, at most one of the triangles may be equilateral, in which case the other must be scalene.
 			  \begin{rek*}
 			  	In such a configuration of points, any distance repeating more than once from a single point must determine an equilateral triangle.
 			  \end{rek*}
 			  To see that this is true, consider a configuration in which $\Oc P_1 =\Oc P_2 = \Oc P_3= d_1$. Clearly, if only one or two of the three incomplete triangles is equilateral, we would have an equilateral and an isosceles triangle co-occurring. Otherwise, if none of the distances $P_1 P_2, P_2 P_3$ or $P_3 P_1$ are equal to $d_1$, and any of them differ, then $\triangle P_1 P_2 P_3$ must be an isosceles triangle not congruent to either of the two isosceles triangles containing $\Oc$. This is a contradiction as this configuration would then determine three distinct triangles. So, they must all be either $d_2$ or $d_3$. Then $\triangle P_1 P_2 P_3$ is equilateral, which is impossible as we already have an isosceles triangle. So, we must have $P_1 P_2 = P_2 P_3 = P_3 P_1 = d_1$.
 			  
 			  From this, it is clearly impossible for all of the distances from $\Oc$ to be the same, otherwise every triangle determined by this configuration would be equilateral, so specifically, the configuration would only determine one distinct triangle. So, there must be at least two distinct distances determined from $\Oc$. Consider one such configuration in which $\Oc P_1 =\Oc P_2 = \Oc P_3= d_1$ while $\Oc P_4 = d_2$. By the remark, $\triangle \Oc P_2 P_3$ must be equilateral, so specifically $P_2 P_3=d_1$. Since the remaining distinct triangle must be scalene, $\Oc P_3 P_4$ must be scalene, so specifically $P_3 P_4 = d_3$. By the same reasoning, $\triangle P_2 P_3 P_4$ must also be a congruent scalene triangle, so $P_2 P_4 = d_2$. Then $\triangle \Oc P_2 P_4$ is an isosceles triangle, which is a contradiction since $\triangle \Oc P_2 P_3$ is equilateral
 			  \begin{figure}[h!]
 				  \centering
 				  \begin{tikzpicture}[scale=.02]
	\tkzDefPoint(0,0){O}
	\tkzDefPoint(-90, 40){A}
	\tkzDefPoint(-55, 80){B} %
	\tkzDefPoint(0,100){C}
	\tkzDefPoint(55, 80){D}
	\tkzDefPoint(90, 40){E}
	
	\tkzDefMidPoint(O, E) \tkzGetPoint{M}

	\tkzDrawPoints(O, A, B, C, D, E)
	\tkzDrawSegment(O, A)	
	\tkzDrawSegment(O, B)
	\tkzDrawSegment(O, C)
	\tkzDrawSegment[color=darkgreen](O, D)
	\tkzDrawSegment[dashed](O, E)
	\tkzDrawSegment(B, C)
	\tkzDrawSegment[color=darkgreen](B, D)
	\tkzDrawSegment[color=darkblue](C, D)	
	
	\tkzMarkSegment[size=2pt, mark=|](O, A)
	\tkzMarkSegment[size=2pt, mark=|](O, B)
	\tkzMarkSegment[size=2pt, mark=|](O, C)
	\tkzMarkSegment[size=2pt, mark=||](O, D)
	\tkzMarkSegment[size=2pt, mark=|](B, C)
	\tkzMarkSegment[pos=0.6, size=2pt, mark=||](B, D)
	\tkzMarkSegment[size=2pt, mark=|||](C, D)
	 
 	\tkzLabelPoint(O){$\Oc$}
 	\tkzLabelPoint[left](A){$P_1$}
 	\tkzLabelPoint[left](B){$P_2$}
 	\tkzLabelPoint[above](C){$P_3$}
 	\tkzLabelPoint[right](D){$P_4$}
 	\tkzLabelPoint[right](E){$P_5$}
	
	\tkzLabelPoint[anchor=center](M){?}
\end{tikzpicture}
 				  \caption{}
 				  \label{fig:3distcontra}
        \end{figure} 
        
       Thus, it is impossible for any distance to repeat more than once without determining more than two distinct triangles. For $d>3$, by a simple application of the pigeonhole principle it is clear that one of the distances must be repeated more than once, so by the above, configurations of more than six points determining three distinct distances necessarily determine at least three distinct triangles independent of dimension. In the case where $d=3$, the only possible configuration contains one repeat of two of the distinct distances (for example $d_1$ and $d_2$) and one occurrence of the third. This configuration determines $n=3$ distinct distances and two of them are repeated (i.e., $m=2$). So, by \ref{lem:repeat_bound}, this configuration must determine at least $\ceil*{\frac{3(3-1) + 4(2)}{6}} = 3$ distinct triangles.
 		\end{description}
 	\end{proof}
 	
 	\section{Geometry of Two Distinct Triangles Determined by \texorpdfstring{$2d$}{2d} Points}\label{sec:geom}
 	By Proposition \ref{prop:elim_dds}, any configuration of at least $2d$ points in $d\geq 3$ dimensions determining two distinct triangles must also determine only two distinct distances. We can then prove the following proposition about the geometry of two distinct triangles.
 	
 	\begin{prop}\label{prop:tri_geom}
 		Given a configuration of at least $2d$ points in $d\geq 3$ dimensions determining two distinct triangles and two distinct distances, one of the triangles must be equilateral, and the other triangle must be isosceles with its repeated edge length being the same as the edge length of the equilateral triangle.
 	\end{prop}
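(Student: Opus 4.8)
The plan is to exploit the fact that, with only the two distances $d_1, d_2$ present, no triple can be scalene, so every triangle determined by the configuration is either equilateral (all sides $d_1$ or all $d_2$) or isosceles (sides $d_1,d_1,d_2$ or $d_1,d_2,d_2$). There are thus only four \emph{a priori} possible triangle types, and the task reduces to determining which two can coexist. I will reason throughout by fixing a point and coloring the edges to each other point by which of $d_1,d_2$ it realizes.

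First I would establish that an equilateral triangle must occur. Fixing any point $\Oc$, the $2d-1\ge 5$ distances from $\Oc$ take only two values, so by pigeonhole at least three of them agree, say $\Oc P_1=\Oc P_2=\Oc P_3=d_1$. If no equilateral triangle existed anywhere, then each $\triangle\Oc P_iP_j$, already carrying two sides of length $d_1$, would be forced to be isosceles, i.e.\ $P_iP_j=d_2$; but then $\triangle P_1P_2P_3$ would be equilateral with side $d_2$, a contradiction. Hence an equilateral triangle exists, and I relabel so that its side is $d_1$.

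Next I would rule out the possibility that \emph{both} determined triangles are equilateral, so that an isosceles triangle must also appear. Assuming no isosceles triangle occurs, pick a pair $x,x'$ with $xx'=d_1$ (which exists since $d_1$ is realized) and note that for every other point $w$ the triangle $\triangle xx'w$ must avoid being isosceles; the only such option is $xw=x'w=d_1$. This makes $x$ lie at distance $d_1$ from every point, and then for any two further points $w,w'$ the triangle $\triangle xww'$ likewise forces $ww'=d_1$, so every pairwise distance equals $d_1$, contradicting the presence of two distinct distances. Combined with the previous step and the hypothesis of exactly two distinct triangles, this shows one triangle is equilateral of side $d_1$ and the other is isosceles.

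It remains to show the isosceles triangle has its repeated edge of length $d_1$ rather than $d_2$; this is the crux and the main obstacle. Suppose instead the two triangles are $(d_1,d_1,d_1)$ and $(d_2,d_2,d_1)$, so that the types $(d_1,d_1,d_2)$ and $(d_2,d_2,d_2)$ are both forbidden. The absence of $(d_1,d_1,d_2)$ makes the relation ``distance $d_1$'' transitive, so the $d_1$-graph is a disjoint union of cliques; the absence of the monochromatic $(d_2,d_2,d_2)$ forces the complementary $d_2$-graph to be triangle-free, which caps the number of cliques at two. A single clique would make all distances $d_1$, so there are exactly two cliques $A,B$, each a set of mutually $d_1$-equidistant points (hence an affinely independent regular simplex), with every cross-distance equal to $d_2$. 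The key geometric observation is then that each point of $A$, being equidistant from all of $B$, must lie on the affine subspace through the circumcenter of $B$ orthogonal to $\operatorname{aff}(B)$, whose dimension is $d-(|B|-1)$. Since $A$ spans a $(|A|-1)$-dimensional affine set, this yields $|A|+|B|\le d+2$, contradicting $|A|+|B|=2d$ for $d\ge 3$. Hence the repeated edge of the isosceles triangle must be $d_1$, matching the side of the equilateral triangle, which completes the argument.
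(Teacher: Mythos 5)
Your proof is correct and, at its crux, takes a genuinely different route from the paper's. The first two steps (an equilateral triangle must occur; not both triangles can be equilateral) are close in spirit to the paper's opening argument, though you are more careful: the paper passes from ``one triangle is equilateral'' directly to ``the other is isosceles'' without explicitly excluding the $(d_2,d_2,d_2)$ possibility, which your second step disposes of cleanly. The real divergence is in ruling out the pair $(d_1,d_1,d_1)$ and $(d_2,d_2,d_1)$. The paper argues locally: it pigeonholes a distance repeated $d$ times from a fixed point, splits into two cases according to which distance repeats, and in each case builds a regular simplex and shows that the leftover points would all have to coincide with its circumcenter or land in a forbidden hyperplane. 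You argue globally: forbidding $(d_1,d_1,d_2)$ makes the $d_1$-relation transitive, forbidding $(d_2,d_2,d_2)$ makes the $d_2$-graph triangle-free, so the $2d$ points split into exactly two $d_1$-cliques $A$ and $B$ with all cross-distances equal to $d_2$, and the dimension count $(|A|-1)+(|B|-1)\le d$ --- since $A$ lies in the affine subspace of points equidistant from $B$ --- contradicts $|A|+|B|=2d$ for $d\ge 3$. This is shorter, avoids the case split entirely, and uses only the facts about equidistant loci already recorded in the paper's appendix; what the paper's version buys in exchange is the explicit simplex-and-circumcenter picture that it reuses in the uniqueness argument of Section 5. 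One caveat, shared with the paper rather than specific to you: since $T(P)$ counts only noncollinear triples, the transitivity step (like the paper's ``each $\triangle O P_iP_j$ has two sides $d_1$, hence is equilateral'' step) tacitly assumes no three of the points are collinear, which could in principle fail only when $d_2=2d_1$; a sentence disposing of that degenerate case would make either argument airtight.
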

 	\begin{proof}
 		We will begin by proving that one of the triangles must be equilateral. Assume to the contrary that neither of the triangles is equilateral. In this case, both of the triangles are isosceles with side lengths $(d_1, d_1, d_2)$ and $(d_2, d_2, d_1)$, which we will reference as $T_1$ and $T_2$ respectively. Clearly, since this configuration determines two distinct distances, there must be at least one point which determines both distances. Additionally, by the pigeonhole principle, one of the distances must occur at least $d$ out of $2d-1$ times from a given point. So, without loss of generality, assume $d_1$ is determined by $\Oc$ and at least three distinct points $P_1$, $P_2$, $P_3$, etc. All of $\triangle \Oc P_1 P_2$, $\triangle \Oc P_1 P_3$ and $\triangle \Oc P_2 P_3$ must be congruent to $T_1$. So, $P_1 P_2 = P_1 P_3 = P_2 P_3 = d_2$. Then $\triangle P_1 P_2 P_3$ is equilateral, which is a contradiction. 
 		
 		Since one of the triangles must be equilateral (assume without loss of generality that it has side length $d_1$), it remains to eliminate the case where repeated distance of the second triangle is $d_2$. We will denote the equilateral triangle as $T_{equ}$ and the isosceles triangle as $T_{iso}$. For the following, we will split into two cases. For the first case we will consider $d_1$ as the distance that occurs $d$ times. Without loss of generality, assume that $\Oc P_1$,$\dots$ $\Oc P_d$ are $d_1$ and assume $\Oc P_{d+1} = d_2$.
 		\begin{figure}[h!]
 		  \centering
 		  \begin{tikzpicture}[scale=.02]
	\tkzDefPoint(0,0){O}
	\tkzDefPoint(-90, 40){A}
	\tkzDefPoint(-55, 80){B}
	\tkzDefPoint(0,100){C}
	\tkzDefPoint(55, 80){D}
	\tkzDefPoint(90, 40){E}
	
	\tkzDefMidPoint(O, E) \tkzGetPoint{M}

	\tkzDrawPoints(O, A, B, C, D, E)
	\tkzDrawSegment(O, A)	
	\tkzDrawSegment(O, B)
	\tkzDrawSegment(O, C)
	\tkzDrawSegment[color=darkblue](O, D)
	\tkzDrawSegment[dashed](O, E)
	
	\tkzMarkSegment[size=2pt, mark=|](O, A)
	\tkzMarkSegment[size=2pt, mark=|](O, B)
	\tkzMarkSegment[size=2pt, mark=|](O, C)
	\tkzMarkSegment[size=2pt, mark=||](O, D)

 	\tkzLabelPoint(O){$\Oc$}
 	\tkzLabelPoints[left](A, B, C, D)
 	\tkzLabelPoints[above](E)
 	
 	\tkzLabelPoint[anchor=center](M){?}

\end{tikzpicture}
 			\caption{The setup for the 3-dimensional version of this argument}
    \end{figure}
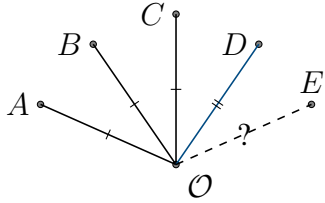
 		Notice that a single edge length of $d_2$ is sufficient to show that a given triangle is congruent to $T_{iso}$, and similarly two edge lengths of $d_1$ are sufficient to show congruence to $T_{equ}$. Then, since each triangle in $\{\triangle \Oc P_i P_j\}_{1 \leq i, j\leq d}$ for $i\neq j$ contains two edges of length $d_1$ (namely $\Oc P_i$ and $\Oc P_j$), each must be congruent to $T_{equ}$. And, specifically, we must have $P_i P_j = d_1$. This means that $\{\Oc, P_1,\dots, P_d\}$ is a set of $d+1$ mutually equidistant points in $d$ dimensions; namely, it determines a $d$-simplex. Notice now that none of the remaining points $P_{d+2},\dots,P_{2d - 1}$ may lie at distance $d_1$ from any of the points of the simplex, else, it must lie at distance $d_1$ from all of the points of the simplex. This is a contradiction as the maximum number of mutually equidistant points in $d$-dimensional Euclidean space is $d+1$, as is achieved by the regular $d$-simplex. 
 		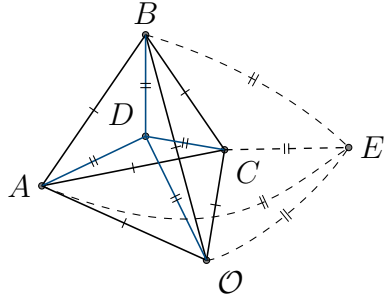
\begin{figure}[h!]
 		  \centering
 		  \begin{tikzpicture}[scale=.03]
	\tkzDefPoint(27,0){O}
	\tkzDefPoint(-46, 33){A}
	\tkzDefPoint(0, 100){B} 
	\tkzDefPoint(35,49){C}
	\tkzDefPoint(0, 55){D}
	\tkzDefPoint(90, 50){E}
	

	\tkzDrawPoints(O, A, B, C, D, E)
	\tkzDrawSegment(O, A)	
	\tkzDrawSegment(O, B)
	\tkzDrawSegment(O, C)
	\tkzDrawSegment(A, B)
	\tkzDrawSegment(B, C)
	\tkzDrawSegment(A, C)

	\tkzDrawSegment[color=darkblue](D, O)
	\tkzDrawSegment[color=darkblue](D, A)
	\tkzDrawSegment[color=darkblue](D, B)
	\tkzDrawSegment[color=darkblue](D, C)
	
 	\draw[dashed, decorate, decoration={markings, mark=at position 0.49 with {\arrow[line width=.08mm]{|}};}] (B) .. controls (35, 90) and (70, 70) .. (E);
 	\draw[dashed, decorate, decoration={markings, mark=at position 0.51 with {\arrow[line width=.08mm]{|}};}] (B) .. controls (35, 90) and (70, 70) .. (E);
 	\draw[dashed] (B) .. controls (35, 90) and (70, 70) .. (E);
 	
 	\tkzDrawSegment[dashed, line width=.5pt](C, E)
 	
 	\draw[dashed, decorate, decoration={markings, mark=at position 0.49 with {\arrow[line width=.08mm]{|}};}] (O) .. controls (35, 0) and (70, 20) .. (E);
 	\draw[dashed, decorate, decoration={markings, mark=at position 0.51 with {\arrow[line width=.08mm]{|}};}] (O) .. controls (35, 0) and (70, 20) .. (E);
 	\draw[dashed] (O) .. controls (35, 0) and (70, 20) .. (E);
 	
 	\draw[dashed, decorate, decoration={markings, mark=at position 0.7 with {\arrow[line width=.1mm]{|}};}] (A) .. controls (35, 0) and (60, 30) .. (E);
 	\draw[dashed, decorate, decoration={markings, mark=at position 0.69 with {\arrow[line width=.1mm]{|}};}] (A) .. controls (35, 0) and (60, 30) .. (E);
 	\draw[dashed] (A) .. controls (35, 0) and (60, 30) .. (E);

	\tkzMarkSegment[size=2pt, mark=|](O, A)
	\tkzMarkSegment[size=2pt, mark=|](O, B)
	\tkzMarkSegment[size=2pt, mark=|](O, C)
	\tkzMarkSegment[size=2pt, mark=|](A, B)
	\tkzMarkSegment[size=2pt, mark=|](B, C)
	\tkzMarkSegment[size=2pt, mark=|](A, C)
	
	\tkzMarkSegment[size=2pt, mark=||](D, O)
	\tkzMarkSegment[size=2pt, mark=||](D, A)
	\tkzMarkSegment[size=2pt, mark=||](D, B)
	\tkzMarkSegment[size=2pt, mark=||](D, C)
	
	\tkzMarkSegment[size=2pt, mark=||](C, E)
	

 	\tkzLabelPoint(O){$\Oc$}
 	\tkzLabelPoints[left](A)
 	\tkzLabelPoints[above](B)
 	\tkzLabelPoints[below right](C)
 	\tkzLabelPoints[above left](D)
 	\tkzLabelPoints[right](E)

\end{tikzpicture}
 			\caption{In the 3-dimensional case, our simplex is the tetrahedron whose vertices are $\Oc$, $A$, $B$ and $C$. The point $D$ is the center of the tetrahedron. It is intuitively clear from this image that there is no consistent position for the contradictory point $E$ other than $D$.}
    \end{figure}
    So, specifically, every distance from each point that is not a vertex of the simplex to a vertex of the simplex must be $d_2$. So each point $P_{d+1},\dots, P_{2d -1}$ is equidistant from all of the vertices of the simplex. In general, $d+1$ points in general linear position uniquely determine a $(d-1)$-sphere (see Appendix \ref{app:geom}). Notice then that since the vertices of a $d$-simplex lie in general position, they uniquely determine a $(d-1)$-sphere. But, since each of the points $P_{d+1},\dots, P_{2d -1}$ by definition lie at the center of a $(d-1)$-sphere containing all of the vertices of the simplex, and there is only one such sphere, all of these points must coincide, which is a contradiction. Note that in $d=3$, there are two such points, so this argument holds for all $d\geq 3$.
    
    Consider instead the case where $d_2$ is repeated $d$ times. For clarity, relabel $P_{d+1}$ as $Q$ and without loss of generality, let $\Oc Q = d_1$. By a similar argument to the $d_1$ case, each triangle in $\{\triangle \Oc P_i P_j\}_{1 \leq i, j\leq d}$ for $i\neq j$ is congruent to $T_{iso}$. Then, specifically, each distance $P_i P_j$ is $d_1$. Clearly then, $\{P_1, \dots, P_d\}$ is a set of $d$ mutually equidistant point. So, these points are the vertices of a $(d-1)$-simplex. Now, notice that since $\Oc Q = d_1$ and each distance from $\Oc$ to a vertex of the simplex (i.e. $P_i$ for $1\leq i\leq d$) must be $d_2$, $\triangle \Oc Q P_i$ is congruent to $T_{iso}$. Thus $Q P_i = d_2$ for all $1 \leq i \leq d$. The set of all points equidistant from two points in $d$ dimensions, is a $(d-1)$-hyperplane (see Appendix \ref{app:geom}) So specifically, the vertices of the simplex lie on a $(d-1)$-hyperplane orthogonal to the line through $\Oc$ and $Q$.
    
    Suppose $\Oc P_{d+2} = d_2$. Then $\triangle \Oc P_{d+2} P_i$ must be congruent to $T_{iso}$ for $1\leq i\leq d$, so specifically the distance from $P_{d+2}$ to every vertex of the simplex must be $d_1$. So, $\{P_1, \dots, P_d, P_{d+2}\}$ is a set of $d+1$ mutually equidistant points, i.e. the vertices of a $d$-simplex. Similarly, since $QP_1 = d_2$ and $P_1 P_{d+2} = d_1$, we must have $QP_{d+2} = d_2$. Then, $P_{d+2}$ is equidistant from $\Oc$ and $Q$, so it must lie in the same $(d-1)$-hyperplane as the vertices of the simplex. This is a contradiction as a $d$-simplex clearly cannot lie within a $(d-1)$-hyperplane.
    
    So, fixing $d<j\leq 2d-1$, each $\Oc P_j$ must be $d_1$. Then each triangle made up of either $\Oc$ or $Q$, one of the vertices of the simplex and one of the remaining points $P_j$, must be congruent $T_{iso}$. So specifically, the distance between any point of the simplex and each of the remaining points must be $d_2$. Additionally, since $Q P_1 = P_1 P_j = d_2$, we must have $Q P_j = d_1$. Notice then that all such points $P_j$ are equidistant from $\Oc$ and $Q$. Namely they lie in the same $(d-1)$-hyperplane as the vertices of the simplex. Notice that each of these points is also equidistant from all of the vertices of the simplex. But as seen earlier, there is a unique point equidistant from all of the vertices of a $d$-simplex in $d$-dimensional Euclidean space. So, all of these points must coincide, which is a contradiction. So a point configuration in $d$ dimensions can only determine two distinct triangles if one of them is equilateral and the other is an isosceles triangle whose repeated edge length is the same as the side length of the equilateral triangle.
 	\end{proof}
 	
 	\section{Proving the Main Result}\label{sec:main_result}
 	In this section, we seek to prove that the $d$-orthoplex is the unique configuration of $2d$ points determining two distinct triangles in $d\geq 3$ dimensions. Notice that all previous sections assumed only that we have at least $2d$ points in $d$ dimensions, so at the end of this section, as an easy corollary to the proof of Proposition \ref{prop:unique}, we will see that any configuration of strictly more than $2d$ points in $d\geq 3$ dimensions determines more than two distinct triangles. Taking Proposition \ref{prop:unique} together with Corollary \ref{cor:optimal} will yield our main result, Theorem \ref{thm:main_result}.
 	
 	Note that in the following argument, we need only consider the pair of triangles determined by Proposition \ref{prop:tri_geom}. For what follows, label the equilateral triangle $T_{equ}$ and the isosceles triangle $T_{iso}$. This will be enough to directly construct the vertices of the $d$-orthoplex in $d$ dimensions. The proof proceeds inductively, so we begin by proving the following proposition which will serve as our base case.
 	\begin{prop}
		The set of vertices of an octahedron is the unique configuration of six points determining two distinct triangles in three dimensions.
 	\end{prop}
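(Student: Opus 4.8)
The plan is to feed the two structural results already in hand into a short combinatorial–geometric argument. By Proposition~\ref{prop:elim_dds} the six points determine exactly two distances $d_1$ and $d_2$, and by Proposition~\ref{prop:tri_geom} every noncollinear triple is congruent either to the equilateral triangle $T_{equ}$ with sides $(d_1,d_1,d_1)$ or to the isosceles triangle $T_{iso}$ with sides $(d_1,d_1,d_2)$. The key elementary observation I would isolate is that \emph{no triangle has two sides of length $d_2$}, since neither $T_{equ}$ nor $T_{iso}$ does. Encoding the point set as the graph $G$ on the six points whose edges join the pairs at distance $d_2$, this observation forces $G$ to have maximum degree one: if some point had two neighbors at distance $d_2$, the corresponding triple would either span a triangle with two sides $d_2$, or be collinear, and collinearity would produce the third distance $2d_2$. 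Hence $G$ is a matching.

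The crux is to upgrade this to a \emph{perfect} matching, i.e.\ to rule out a point $\Oc$ all five of whose distances equal $d_1$. Here I would use two facts. First, among the five neighbors of $\Oc$ no four can be mutually at distance $d_1$, since together with $\Oc$ they would form five mutually equidistant points, exceeding the maximum of $d+1=4$ in $\R^3$. Thus the $d_1$-graph on the five neighbors has clique number at most three, which (for $K_5$ with a matching deleted) forces the deleted matching to consist of exactly two disjoint edges; the uncovered fifth neighbor $P$ is then at distance $d_1$ from everything, so $\Oc P=d_1$ and the remaining four points split into two disjoint pairs at distance $d_2$. Second, those four points all lie at distance $d_1$ from both $\Oc$ and $P$, hence on the circle cut from the perpendicular bisector plane of $\Oc P$, of radius $\tfrac{\sqrt3}{2}d_1$. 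This is where the technical heart of the proof lives, in a small planar lemma: four concyclic points on a circle of radius $\rho$ whose pairwise distances form two disjoint pairs at $d_2$ and four pairs at $d_1$ must be the vertices of a square with diagonal $d_2=2\rho$ and side $d_1=\rho\sqrt2$. The reason is that each point of the second pair is equidistant from both points of the first pair, so both lie on the perpendicular bisector of that chord and must be the two endpoints of the perpendicular diameter. Applying the lemma with $\rho=\tfrac{\sqrt3}{2}d_1$ would force $d_1=\rho\sqrt2=\tfrac{\sqrt6}{2}d_1$, a contradiction. Therefore no such $\Oc$ exists and $G$ is a perfect matching: the six points split into three pairs $\{A_i,B_i\}$ with $A_iB_i=d_2$ and all other distances $d_1$.

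Finally I would reconstruct the octahedron. Fixing the pair $\{A_1,B_1\}$, the other four points all lie at distance $d_1$ from both $A_1$ and $B_1$, hence on the circle of radius $r=\sqrt{d_1^2-d_2^2/4}$ in the perpendicular bisector plane of $A_1B_1$, again with the two-disjoint-$d_2$-pairs pattern. Now the same planar lemma applies productively: it makes $A_2B_2$ and $A_3B_3$ perpendicular diameters of that circle and yields $d_2=2r$, whence $d_2=\sqrt2\,d_1$ and $r=d_1/\sqrt2=d_2/2$. Since the midpoint $M$ of $A_1B_1$ then lies at distance $d_2/2=r$ from $A_1$ and $B_1$ along the axis perpendicular to the square's plane, all six points sit at distance $r$ from $M$ along the three mutually perpendicular directions $A_1B_1$, $A_2B_2$, $A_3B_3$. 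This is precisely the regular octahedron, giving both existence and uniqueness up to isometry and scaling.

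I expect the main obstacle to be the perfect-matching step, namely showing that no vertex is isolated in $G$. Both that step and the final reconstruction hinge on the same planar square lemma, so stating and proving it cleanly — and keeping careful track of the two different circle radii in its two applications (the forbidden $\tfrac{\sqrt3}{2}d_1$ versus the admissible $r=\sqrt{d_1^2-d_2^2/4}$) — is where the genuine content sits. The rest (the clique bound, the collinearity check, and the final identification with the orthoplex) is routine bookkeeping.
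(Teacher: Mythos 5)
Your argument is correct in substance but takes a genuinely different route from the paper's. The paper simply picks a pair $\Oc, E$ at distance $d_2$, notes that the remaining four points must then lie on a common circle in the perpendicular bisector plane of $\Oc E$, and eliminates the candidate cyclic quadrilaterals (rectangle, isosceles trapezoid) by short ad hoc arguments until only the square survives; it never needs to know the global structure of the $d_2$-pairs. You instead organize everything around the graph of $d_2$-pairs: it is a matching because no admissible triangle carries two $d_2$-sides; it is a \emph{perfect} matching by the clique bound (at most $d+1=4$ mutually equidistant points in $\R^3$) combined with your planar square lemma applied to the circle of radius $\tfrac{\sqrt3}{2}d_1$; and the same lemma applied to the circle of radius $\sqrt{d_1^2-d_2^2/4}$ then reconstructs the octahedron and forces $d_2=\sqrt2\,d_1$. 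Your route is longer --- the perfect-matching step is work the paper avoids entirely by anchoring on a single $d_2$-edge --- but it is more systematic, isolates the one piece of planar geometry actually being used (the paper proves your square lemma implicitly, scattered across its trapezoid and rectangle cases), and makes the metric constraints explicit rather than leaving them to the final coordinate placement.

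One concrete hole: in establishing maximum degree one you dismiss the collinear case on the grounds that a point with two $d_2$-neighbors $A,B$ on a line through it would ``produce the third distance $2d_2$.'' That fails when $d_1=2d_2$, since then $AB=2d_2=d_1$ and no new distance appears; $T_{iso}=(d_1,d_1,d_2)$ is still nondegenerate there, so nothing earlier in the paper excludes it. The configuration is nonetheless impossible: any fourth point $W$ forms triangles with each of the two short segments, and since an admissible triangle has at most one $d_2$-side this forces $W$ to be at distance $d_1$ from all three collinear points, while the median-length formula puts the midpoint at distance $\tfrac{\sqrt3}{2}d_1\neq d_1$ from $W$. You should add that line. (The paper's own proof is no more careful here --- it silently treats $\triangle \Oc AE$ etc.\ as nondegenerate --- but since you explicitly raised collinearity, the dismissal should be airtight.) With that patched, your proof is complete.
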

 	\begin{proof}
 		We begin with a similar setup as before, noting that at least one of the six points must determine both distances so we may assume by relabeling that $\Oc$ is one of these points. For convenience, we will label the remaining points $A$, $B$, $C$, $D$ and $E$. Assume $\Oc E = d_2$. Then, notice that $\triangle \Oc AE$, $\triangle \Oc BE$, $\triangle \Oc CE$ and $\triangle \Oc DE$ must all be congruent to $T_{iso}$, meaning all distances from points other than $E$ to $\Oc$ and all distances from points other than $\Oc$ to $E$ must be $d_1$.
		\begin{figure}[h!]
 		  \centering
			\begin{tikzpicture}[scale=.02]
	\tkzDefPoint(0,0){O}
	\tkzDefPoint(-90, 40){A}
	\tkzDefPoint(-55, 80){B} %
	\tkzDefPoint(0,100){C}
	\tkzDefPoint(55, 80){D}
	\tkzDefPoint(90, 40){E}
	
	\tkzDrawPoints(O, A, B, C, D, E)
	\tkzDrawSegment(O, A)	
	\tkzDrawSegment(O, B)
	\tkzDrawSegment(O, C)
	\tkzDrawSegment(O, D)
	\tkzDrawSegment(E, A)	
	\tkzDrawSegment(E, B)
	\tkzDrawSegment(E, C)
	\tkzDrawSegment(E, D)
	\tkzDrawSegment[color=darkblue](O, E)
	
	\tkzMarkSegment[size=2pt, mark=|](O, A)
	\tkzMarkSegment[size=2pt, mark=|, pos=0.7](O, B)
	\tkzMarkSegment[size=2pt, mark=|](O, C)
	\tkzMarkSegment[size=2pt, mark=|, pos=0.6](O, D)
	\tkzMarkSegment[size=2pt, mark=||](O, E)
	
	\tkzMarkSegment[size=2pt, mark=|, pos=0.8](E, A)
	\tkzMarkSegment[size=2pt, mark=|](E, B)
	\tkzMarkSegment[size=2pt, mark=|, pos=0.55](E, C)
	\tkzMarkSegment[size=2pt, mark=|](E, D)
	 
 	\tkzLabelPoint(O){$\Oc$}
 	\tkzLabelPoints[left](A, B)
 	\tkzLabelPoints[above](C)
 	\tkzLabelPoints[right](D, E)
	
\end{tikzpicture}
			\caption{}
    \end{figure} 		
 		  Notice that $A$, $B$, $C$ and $D$ are then all equidistant from $\Oc$ and $E$, so specifically they must be coplanar and lie on a circle centered on the midpoint of $\Oc$ and $E$ (see Appendix \ref{app:geom}). So, to determine the remaining distances, it remains only to determine the cyclic quadrilateral $\square ABCD$. Notice that any two adjacent edges equal to $d_2$ would necessarily determine an isosceles triangle with $d_2$ as the repeated edge, which is not one of our two distinct triangles. So, the only two cases we must eliminate are an isosceles trapezoid with three edges equal to $d_1$ and one edge equal to $d_2$, and a rectangle with opposite pairs of edges equal to $d_1$ and $d_2$. The diagonal of a rectangle is always longer than either of the two sides, so $AC$ and $BD$ could not be one of the two distances, which is a contradiction. In the case of the trapezoid, suppose $AB = d_2$. Then the diagonal cannot be $d_2$ since both of the diagonals are adjacent to the edge of length $d_2$. If the diagonal were $d_1$, then $\triangle CDA$ and $\triangle DCB$ would be congruent equilateral triangles on the same side of the same base. Specifically, they would be coincident which would imply $A = B$, a clear contradiction. The only possible remaining cyclic quadrilateral is a square of side length $d_1$. Then, $AC$ and $BD$ are necessarily $d_2$. 
 		  
 		   An octahedron centered at the origin positioned along the coordinate axes is given by vertices $\{(\pm r, 0, 0), (0, \pm r, 0), (0, 0, \pm r)\}$ for some positive constant $r$. Assume without loss of generality that the shared midpoint of $\Oc E$, $AC$ and $BD$ lies at the origin and $E$ lies at $(d_2/2, 0, 0)$. This clearly presents no problem as it requires at worst a similarity transformation of any configuration satisfying the distances determined above. Then we must conclude that $\Oc = (-d_2/2, 0, 0)$. It is then easy to see that all of the remaining points must lie at the coordinates of the remaining vertices of the above octahedron. Then, we may conclude that any configuration of six points in three dimensions determining two distinct triangles must be given by the vertices of an octahedron.
 	\end{proof}
 	
 	With the base case established, we can prove the following partial result by induction on number of dimensions.
 	\begin{prop}\label{prop:unique}
 		The set of vertices of the $d$-orthoplex is the unique configuration of $2d$ points in $d\geq 3$ dimensions determining two distinct triangles.
 	\end{prop}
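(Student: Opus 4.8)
The plan is to induct on the dimension $d$, taking the octahedron case just established as the base case at $d=3$ and assuming the statement holds in dimension $d-1$. Throughout, write $d_1$ for the common side length of $T_{equ}$ and the repeated (leg) length of $T_{iso}$, and $d_2$ for the base of $T_{iso}$, as supplied by Proposition \ref{prop:tri_geom}; these are the only two distances present. Since $d_2$ is realized, I first fix a point $\Oc$ together with a point $\Oc'$ at distance $d_2$, which I call its \emph{antipode}. The first observation is that $\Oc'$ is the \emph{unique} point at distance $d_2$ from $\Oc$: if $P,Q$ were both at distance $d_2$ from $\Oc$, then $\triangle \Oc P Q$ would have two sides of length $d_2$, making it either equilateral with side $d_2$ or isosceles with repeated length $d_2$, and neither $T_{equ}$ nor $T_{iso}$ has this form. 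Hence each of the remaining $2d-2$ points lies at distance $d_1$ from $\Oc$.

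Next I show these $2d-2$ points also lie at distance $d_1$ from $\Oc'$. For any such point $P$, the triangle $\triangle \Oc P \Oc'$ has $\Oc\Oc'=d_2$ and $\Oc P = d_1$; since it contains an edge of length $d_2$ it cannot be $T_{equ}$, so it is congruent to $T_{iso}$ with $\Oc\Oc'$ as its base, forcing $P\Oc' = d_1$. Thus every point other than $\Oc,\Oc'$ is equidistant from $\Oc$ and $\Oc'$ and therefore lies on the perpendicular bisector hyperplane $H$ of $\overline{\Oc\Oc'}$, a copy of $\R^{d-1}$ (Appendix \ref{app:geom}). Being also at distance $d_1$ from $\Oc$, these points in fact lie on the $(d-2)$-sphere obtained by intersecting $H$ with the sphere of radius $d_1$ about $\Oc$.

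The crux is to verify that these $2d-2$ points, viewed as a configuration in $H\cong\R^{d-1}$, determine \emph{exactly} two distinct triangles, so that the inductive hypothesis applies. They determine at most two, being a subset of the original configuration. They cannot be collinear: a line meets the sphere of radius $d_1$ about $\Oc$ in at most two points, whereas $2d-2\geq 4$. And they cannot determine only one distinct triangle: by \cite{brenner_et_al} the maximum number of points determining a single triangle in $\R^{d-1}$ is $d$ (the vertices of the $(d-1)$-simplex), and $2d-2 > d$ for $d\geq 3$. Hence they determine exactly two distinct triangles, and by the inductive hypothesis they are the vertices of a $(d-1)$-orthoplex lying in $H$, whose two distances are necessarily $d_1$ (between non-antipodal vertices) and $d_2$ (between antipodal vertices).

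It remains to place $\Oc$ and $\Oc'$. Both are at distance $d_1$ from all $2d-2$ vertices of this $(d-1)$-orthoplex, so both lie on the line through its center $c$ orthogonal to $H$. Writing the sub-orthoplex with parameter $r$, its short distance is $r\sqrt2 = d_1$ and its antipodal distance is $2r = d_2$; a point at signed height $h$ on the axis is at distance $\sqrt{r^2+h^2}$ from each vertex, and setting this equal to $d_1 = r\sqrt 2$ gives $h = \pm r$. Thus $\Oc$ and $\Oc'$ sit at the two points $c\pm r\nu$, with $\nu$ a unit normal to $H$, which are exactly the two missing vertices of a $d$-orthoplex; this also recovers the forced relation $d_2 = \sqrt 2\, d_1$ and confirms $\Oc\Oc' = 2r = d_2$, consistent with $d_2$ being the antipodal distance. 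This completes the configuration as a $d$-orthoplex and closes the induction. I expect the main obstacle to be the middle step: guaranteeing that the hyperplane section is non-degenerate and genuinely two-triangle-determining, since this is precisely what licenses the appeal to the inductive hypothesis and relies on importing the single-triangle optimality bound from \cite{brenner_et_al}.
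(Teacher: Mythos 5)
Your proposal is correct and follows essentially the same route as the paper's proof: induct on dimension, show the $2d-2$ points other than $\Oc$ and its $d_2$-partner all lie at distance $d_1$ from both and hence in the perpendicular bisector hyperplane, rule out the one-triangle case via the bound from \cite{brenner_et_al} so the inductive hypothesis yields a $(d-1)$-orthoplex, and then place the remaining two points on the normal axis. Your write-up is in fact a bit more careful than the paper's in two spots the paper leaves implicit --- checking non-collinearity of the hyperplane section and explicitly solving $\sqrt{r^2+h^2}=d_1$ to locate $\Oc$ and $\Oc'$ --- but the argument is the same.
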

 	\begin{proof}
 	
 	 		Assume the unique point configuration determining two distinct triangles in $k$ dimensions is determined by the vertices of the $k$-orthoplex, which specifically contains $2k$ points. Now, consider a configuration of $2k+2$ points determining two distinct triangles in $k+1$ dimensions. By Proposition \ref{prop:elim_dds}, it must determine exactly two distinct distances, and by Proposition \ref{prop:tri_geom} one of the triangles must be equilateral, and the other is isosceles with its repeated edge length equal to the side length of the equilateral triangle. For clarity, relabel $P_{2k+1}$ as $Q$, then we may assume without loss of generality that $\Oc Q = d_2$. Notice that a single edge length of $d_2$ is sufficient to determine that a given triangle is congruent to $T_{iso}$, so specifically we must have $\Oc P_i = Q P_i = d_1$ for all $1 \leq i \leq 2k$. So, all of these points are equidistant from $\Oc$ and $Q$, so as mentioned before, they lie in a $k$-hyperplane. Notice further that since the distances determined by each of the points from $\Oc$ and $Q$ are the same, specifically, all of the points $P_i$ lie on a $(k-2)$-sphere centered on the midpoint of $\Oc Q$. 
 	 		
 	 		For the entire point set to determine two distinct triangles, these $2k$ points must determine either one or two distinct triangles. The greatest number of points in $k$ dimensions determining one distinct triangles is $k+1$ \cite{brenner_et_al}, so for $k>1$, $2k$ points in $k$ dimensions must determine more than one distinct triangle. So, specifically, the points $P_1, \dots, P_{2k}$ must determine two distinct triangles. Since they all lie in the same $k$-hyperplane, we may conclude from the inductive hypothesis that these points then must be the vertices of a $k$-orthoplex. If we then assume again that the center of the $k$-orthoplex lies at the origin and the remaining vertices lie along the $x_1,\dots, x_k$ coordinate axes, then clearly $\Oc$ and $Q$ lie on the $x_{k+1}$ coordinate axis. Specifically, they lie at coordinates $(0, \dots, 0, d_2/2)$ and $(0, \dots, 0, -d_2/2)$. So, the whole point configuration must determine the vertices of a $(k+1)$-orthoplex.
  \end{proof}
  
  \begin{cor}\label{cor:optimal}
  	Any configuration of strictly greater than $2d$ points in $d\geq 3$ dimensions determines more than two distinct triangles.
  \end{cor}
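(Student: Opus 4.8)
The plan is to prove the statement by induction on the dimension $d$, recycling almost verbatim the hyperplane reduction that drives the proof of Proposition~\ref{prop:unique}. Suppose for contradiction that some configuration $S$ of $N > 2d$ points in $\R^d$ (with $d\geq 3$) determines exactly two distinct triangles. By Proposition~\ref{prop:elim_dds}, $S$ determines exactly two distinct distances $d_1,d_2$, and by Proposition~\ref{prop:tri_geom} the two triangle classes are the equilateral $T_{equ}$ with side $d_1$ and the isosceles $T_{iso}$ with equal edges $d_1$ and base $d_2$. In particular, the only triangles $S$ may contain are those congruent to $(d_1,d_1,d_1)$ or $(d_1,d_1,d_2)$; any triple realizing two edges of length $d_2$ is forbidden, since $(d_2,d_2,d_1)$ and $(d_2,d_2,d_2)$ are congruent to neither $T_{equ}$ nor $T_{iso}$. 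I will use this forbidden-triangle observation repeatedly.

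For the inductive step, fix $d=k+1$ with $k\geq 3$ and assume the statement holds in dimension $k$. Since $d_2$ is realized, choose $\Oc,Q\in S$ with $\Oc Q=d_2$. Every remaining point $P$ then sits in a triangle $\triangle \Oc Q P$ containing the edge $d_2$, hence congruent to $T_{iso}$, which forces $\Oc P=QP=d_1$. Thus all $N-2$ remaining points are equidistant from $\Oc$ and $Q$, so they lie in the perpendicular-bisector hyperplane $H\cong\R^{k}$ (and in fact on a common $(k-1)$-sphere inside it). These $N-2>2k$ points form a configuration in $\R^{k}$ whose triangles are a subset of those of $S$, hence determine at most two distinct triangles; but by the inductive hypothesis more than $2k$ points in $\R^{k}$ determine more than two distinct triangles, a contradiction.

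It remains to establish the base case $d=3$, which is where the genuine work lies. Running the same reduction, the $N-2>4$ remaining points lie on a common circle, every pairwise distance being $d_1$ or $d_2$. By the forbidden-triangle observation, no point can be joined to two others by edges of length $d_2$, since such a pair would complete a triangle carrying two $d_2$-edges; hence each point has at most one $d_2$-neighbor. On the other hand, on a fixed circle there are at most two points at a prescribed chord length $d_1$ from a given point, so each point has at most two $d_1$-neighbors. Therefore, in the complete graph on these points every vertex has degree at most three, which forces $N-2\leq 4$, i.e.\ $N\leq 6$, contradicting $N>6$.

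The main obstacle is precisely this base case: in dimension $3$ the reduced points lie on a $1$-sphere, and it is the rigidity of the circle—at most two points at any fixed chord length—combined with the ban on $d_2$-heavy triangles that caps the count. In higher dimensions the reduced points lie on a $(k-1)$-sphere, where the ``at most two at a fixed distance'' bound fails entirely, which is exactly why the general case must be routed through the inductive hypothesis rather than by repeating the counting argument directly.
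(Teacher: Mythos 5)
Your proof is correct, but it takes a genuinely different route from the paper's. The paper deduces the corollary directly from Proposition \ref{prop:unique}: given $2d+1$ points, every $2d$-point subset must already be a $d$-orthoplex, so the extra point $P'$ cannot lie at distance $d_2$ from any vertex (that would create an isosceles triangle with repeated edge $d_2$) and hence lies at distance $d_1$ from all of them, i.e.\ at the circumcenter; then for two non-opposite vertices $P_1, P_2$ the triangle $\triangle P' P_1 P_2$ is simultaneously right-angled at $P'$ and equilateral, a contradiction. You instead run a fresh induction on the dimension that never invokes Proposition \ref{prop:unique}: your inductive step recycles the hyperplane reduction, and your base case is settled by a degree count on the circle (at most one $d_2$-neighbour by the forbidden-triangle observation, at most two $d_1$-neighbours by chord rigidity, so at most four points besides $\Oc$ and $Q$, contradicting $N-2\geq 5$). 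What your version buys is self-containment: it needs only Propositions \ref{prop:elim_dds} and \ref{prop:tri_geom} and so could be established before the uniqueness result; what the paper's version buys is brevity once uniqueness is in hand, plus a concrete picture of where the extra point would have to sit. One caveat, which your write-up shares with the paper's own proof of Proposition \ref{prop:unique}: the inference that $\Oc Q = d_2$ forces $\Oc P = QP = d_1$ presumes $\triangle \Oc Q P$ is nondegenerate, since a collinear triple is not a triangle under Definition \ref{def:tris}; strictly speaking one should also dispose of points on the line through $\Oc$ and $Q$.
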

  \begin{proof}
  	Consider a configuration of $2d+1$ points in $d$ dimensions. As mentioned before, since at most $d+1$ points can determine a single triangle, any $2d$ points must determine two distinct triangles. Then, by Proposition \ref{prop:unique}, any $(2d)$-subset of the point configuration must be the vertices of a $d$-orthoplex. So, specifically, $P_1,\dots, P_{2d}$ must be given by the vertices of a $d$-orthoplex. For convenience, relabel $P_{2d+1}$ as $P'$ Consider the distance between $P'$ and an arbitrary vertex of the orthoplex. Each vertex of the orthoplex lies at distance $d_2$ from exactly one other vertex of the orthoplex, so $P'	$ cannot lie at distance $d_2$ from any of the vertices of the orthoplex; otherwise, an isosceles triangle with repeated edge $d_2$ would be created. So, the distance from $P'$ to each of the vertices of the orthoplex must be $d_1$. Clearly this is the center of the unique $(d-1)$-sphere on which the vertices of the orthoplex lie. This must coincide with the common midpoint of the segments of length $d_2$. Assume without loss of generality that $P_1$ is not opposite $P_2$. Then, $P'P_1 \perp P'P_2$. That is to say, $\triangle P'P_1 P_2$ is right. But, notice that $P_1 P_2 = d_1$, so it is also equilateral, which is a contradiction. So, a configuration of at least $2d+1$ points in $d$ dimensions necessarily determines more than two distinct triangles. So the point configuration consisting of the vertices of the $d$-orthoplex in $d$ dimensions is optimal.
  \end{proof}
  
  Then, Theorem \ref{thm:main_result} follows directly from Proposition \ref{prop:unique} together with Corollary \ref{cor:optimal}.
  
  \section{Proofs of Lemmas}
  For the main proof, we used the three essentially combinatorial lemmas, Lemma \ref{lem:max_dds}, Lemma \ref{lem:distinct_bound} and Lemma \ref{lem:repeat_bound}. In this section, we restate and prove these three lemmas.
  
  \lemmaxdds*
  \begin{proof}
		Let $S$ be a finite point configuration determining $t$ distinct triangles. Consider an arbitrary finite ascending chain $A_3 \subset A_4 \subset \dots \subset A_{\abs{S} - 1} \subset S$ such that $A_3 \subseteq S$ is a three point subset of $S$, and $A_{i+1} \setminus A_i$ contains a single point. Clearly, $A_3$ determines at most 3 distinct distances and 1 distinct triangle. Consider $A_{k+1}$ for some $k$, and let $Q\in A_{k+1}$ be the unique point in $A_{k+1}$ such that $Q\notin A_{k}$. Notice that if $T(A_{k}) = T(A_{k+1})$, then the number of distinct distances determined by $A_k$ must be the same as that determined by $A_{k+1}$. So, assume that $T(A_k) \neq T(A_{k+1})$. Notice that every possible representative of any element of $T(A_{k+1}) \setminus T(A_k)$ is of the form $\triangle Q P_i P_j$ where $P_i, P_j \in A_k$. Then, the only possible new distinct distances are $QP_i$ and $QP_j$, so the difference between the sizes of the distance sets of $A_{k+1}$ and $A_k$ is at most $2\cdot (\abs{T(A_{k+1})} - \abs{T(A_k)})$. Then, proceeding inductively, it is clear that the number of distinct distances determined by $S$ is at most $2(\abs{T(S)} - 1) + 3 = 2t + 1$.
  \end{proof}
  
  \lemdistinctbound*
  \begin{proof}
  	This result is essentially a direct consequence of a result by Fort and Hedlund (Theorem 1 in \cite{fort_hedlund}). For convenience, a paraphrasing of Fort and Hedlund's result is as follows. Given the set of all distinct unordered pairs of positive integers between 1 and $n$ inclusive, a covering of this set by triples is a set of distinct unordered triples such that each pair is a subset of at least one of the triples. The minimal number of triples needed to cover the set of distinct unordered pairs of distinct positive integers is $\ceil{n/3 \cdot \floor{n/2}}$. 
  	
  	Consider a point $Q$ and points $P_1, \dots, P_k$ such that $Q P_i = d_i$ are all distinct distances. Then, consider each pair of these distances. The corresponding points must determine a triangle $\triangle Q P_i P_j$. For the purposes of this proof, a triangle may be thought of as the unordered triple of its edge lengths. Then, the minimal number of triangles needed to realize this configuration is clearly the minimal number of unordered triples of distinct $d_i$'s required to cover all distinct unordered pairs of distinct $d_i$'s. This problem is clearly isomorphic to the problem considered by Fort and Hedlund (i.e., by replacing $d_i$ with $i$), so we can conclude that the minimal number of triangles determined by a point set containing such a point is $\ceil{n/3 \cdot \floor{n/2}}$. 
	\end{proof}  	
	
	\lemrepeatbound*
	\begin{proof}
		Although the setup differs somewhat from the setup above, this proof will proceed in a similar fashion. While Fort and Hedlund considered only pairs of distinct numbers (e.g., $(1,2)$, $(4, 1)$, etc.), for the following setup, we will also allow pairs of the same number (e.g., $(2,2)$, $(5,5)$). Then, consider a set of distinct pairs of (not necessarily distinct) integers between 1 and $n$ inclusive. Let $n$ be the number of pairs of distinct integers and $m$ be the number of pairs not distinct integers. We hope to produce a lower bound on the size of a minimal covering by triples of such an arbitrary set. 
		
		Given a triple $(a, b, c)$, clearly this triple can cover at most 3 distinct pairs, $(a, b)$, $(b, c)$ and $(a, c)$. So, if we have $m=0$, we clearly require at least $n/3$ triples to cover the $n$ pairs of distinct integers. Consider now $m\neq 0$. Each pair of non-distinct integers $(a, a)$ needs to be covered by its own distinct triple $(a, a, b)$. That is to say, $(a, a)$ and $(b, b)$ cannot be covered by the same triple.  Notice that the triple $(a, a, b)$ also includes the pair $(a, b)$. So, each triple required to cover a pair of non-distinct integers can also contain a pair of distinct integers. So one fewer of the distinct pairs needs to be covered separately. Thus, a lower bound on the total number of triples needed to cover such pairs is clearly $\ceil{(n-m)/3 + m} = \ceil{(n+2m)/3}$.
		
		We can apply the same analogy as used in the proof of Lemma \ref{lem:distinct_bound} to apply this bound to our problem. Notice that if a point determines $k$ distinct distances, it necessarily determines $k$ choose $2$ distinct pairs of distinct distances. Substituting this into our above bound, we finally come to the conclusion that given a point which determines $n$ distinct distances, $m$ of which are repeated, this point configuration determines at least $$\ceil*{\frac{n(n-1) + 4m}{6}}$$ distinct triangles.
	\end{proof}
	
	\appendix
	\section{Some Notes on Elementary Euclidean Geometry}
	\label{app:geom}
	
	Throughout the geometric portions of our proofs, we rely on a handful of facts about $n$-dimensional Euclidean geometry which are standard but nontrivial. In this section, we seek to state these facts explicitly and offer sketches of their proofs. 
	\begin{rek}
		A collection of $d+1$ points in general position in $d$ dimensions (that is to say, no $k+1$ points lie in a $k$-hyperplane) uniquely determines a $(d-1)$-sphere.
	\end{rek}
	\begin{proof}[Proof Sketch]
		The idea is to proceed inductively with a base case of the standard fact that three points uniquely determine a circle. Then consider the problem with 4 points in $\R^3$. Choose any two distinct sets of three of those points, which uniquely determine two distinct circles. Consider the unique lines normal to the planes of each of these circles that pass through their centers. Notice that all of the points on each line are equidistant from the three points determining their respective circles. So it is necessary only to observe that these lines intersect, as their point of intersection is clearly the center of the unique sphere containing the four points. The structure of this argument naturally generalizes to higher dimensions. 
	\end{proof}
	\begin{rek}
		The set of points equidistant from two points $A$ and $B$ in $\R^d$	is a $(d-1)$-hyperplane perpendicular to the line $\overline{AB}$. 
	\end{rek}
	\begin{proof}[Proof Sketch]
		Consider taking a point $P:(x_1,\dots, x_d)\in\R^d$ such that $P$ is equidistant from two fixed points $A$ and $B$. Simply write these points in coordinates and consider the equation $\text{d}(A, P) = \text{d}(P, B)$. Notice that after simplification, this yields a linear equation in $d$ variables, which is well-known to be the equation of a $(d-1)$-hyperplane.
	\end{proof}
	\begin{rek}
		The set of points at a fixed distance from two distinct points $A$ and $B$ in $\R^d$ is a $(d-2)$-sphere centered on the midpoint of $AB$.
	\end{rek}
	\begin{proof}[Proof Sketch]
		Let $S$ denote the set of all points at a fixed distance (greater than half the length of $AB$) from both $A$ and $B$. By the above remark, these points lie in a $(d-1)$-hyperplane perpendicular to the line joining $A$ and $B$. Note that the midpoint $M$ of $AB$ lies in the hyperplane. So, consider the right triangle formed by $A$, an arbitrary point in $S$ and $M$. By the hypotenuse-leg congruence theorem, clearly every point in $S$ is equidistant from $M$. Finally, it is clear we can reverse this argument to see that every point that is at the same fixed distance from $M$ must belong to $S$.
	\end{proof}
  	
  \bibliographystyle{alpha}
  \bibliography{biblio}
\end{document}